\renewcommand{\ge}{\geqslant}
\newcommand{\cir}{
\begin{picture}(6,6)
\put(3,3){\circle*{1.5}}
\end{picture}}
\newcommand{\righteqn}{
\begin{picture}(0,0)
 \put(-9,-1){$\left\{\rule{0pt}{25pt}\right.$}
 \end{picture}}
\newcommand{\lin}{ \,\frac{}{\quad\ }\,}
\newtheorem{theorem}{Theorem}
\newtheorem{lemma}[theorem]{Lemma}
\theoremstyle{definition}
\newtheorem{definition}[theorem]{Definition}
\theoremstyle{remark}
\newtheorem{remark}[theorem]{Remark}
\newtheorem{example}[theorem]{Example}
\title{Pairs of mutually annihilating
operators\footnotetext{This is the authors' version of a work that was published in \emph{Linear Algebra Appl.} 430 (2009) 86--105}}
\author{
Vitalij M. Bondarenko
\\ Institute of Mathematics,
Tereshchenkivska 3, Kiev,
Ukraine\\vit-bond@imath.kiev.ua
  \and
Tatiana  G. Gerasimova\\
Mech.-Math. Faculty,
Kiev National University,\\
Vladimirskaja 64, Kiev, Ukraine\\
smyarga@mail.ru
 \and
Vladimir V. Sergeichuk%
\thanks{Corresponding author.}\\
Institute of Mathematics,
Tereshchenkivska 3, Kiev,
Ukraine\\sergeich@imath.kiev.ua}
\date{}
\begin{document}
\maketitle

\begin{abstract}
Pairs $(\cal A,B)$ of mutually annihilating operators ${\cal A}{\cal B}={\cal B}{\cal A}=0$ on a finite dimensional vector space over an algebraically closed field were classified by Gelfand and Ponomarev [\emph{Russian Math. Surveys} 23 (1968) 1--58] by method of linear relations.
The classification of $(\cal A,B)$ over any field was derived by
Nazarova, Roiter, Sergeichuk, and Bondarenko [\emph{J. Soviet Math.} 3 (1975) 636--654] from the classification of finitely generated modules over a dyad of two local Dedekind rings. We give canonical matrices of $(\cal A,B)$ over any field in an explicit form and our proof is constructive: the matrices of $(\cal A,B)$ are sequentially reduced to their canonical form by similarity transformations $(A,B)\mapsto (S^{-1}AS, S^{-1}BS)$.

{\it AMS
classification:}
15A21.

{\it Keywords:}
Canonical form; Classification;
Pairs of linear operators.
\end{abstract}

\section{Introduction}

We consider the problem of classifying
pairs of mutually annihilating operators
\[
{\cal A, B}: V\to V,\qquad
{\cal A}{\cal B}={\cal B}{\cal A}=0
\]
on a finite dimensional vector space $V$.

The pairs $({\cal A, B})$
were classified
\begin{itemize}
  \item
in \cite{gel}
over an algebraically closed field
by method of linear relations, and
  \item
in \cite{n-r-s-b,lau} over any field $\mathbb F$  as modules over $\mathbb F[x,y]/(xy)$;
\end{itemize}
these results are surveyed in Remark \ref{rrr}.

Our classification of $({\cal A, B})$ over any field is constructive: we give an algorithm for reducing its matrices to canonical form by similarity transformations
\begin{equation}\label{fek1}
(A,B)\mapsto S^{-1}(A,B)S:= (S^{-1}AS, S^{-1}BS),\qquad S\text{ is nonsingular}.
\end{equation}

Our paper was inspired by
Oblak's article \cite{Obl}, in which she characterizes all possible pairs of Jordan canonical forms $(J_A,J_B)$ for pairs $(A,B)$ of mutually annihilating matrices $AB=BA=0$ over an algebraically closed field. For this purpose, she puts one matrix in Jordan form, then she uses only those similarity transformations that preserve it and reduces the second matrix to a simple form. We continue to reduce the second matrix until obtain a canonical form of $(A,B)$.

In Section \ref{s1} we formulate the only theorem of this paper:
we classify pairs $(\cal A,B)$  of mutually annihilating operators and give a canonical form of their matrix pairs $(A,B)$.
In Sections \ref{sub1}--\ref{sub3} we prove this theorem and reduce $(A,B)$ to its canonical form (see the end of Section \ref{s1}).

\begin{remark} \label{rrr}

Pairs $(\cal A,B)$ of mutually annihilating operators were classified by different methods:

(i) Gelfand and Ponomarev \cite[Chapter 2]{gel} classified
the pairs $(\cal A,B)$ over  an algebraically closed field by using the apparatus of MacLane's theory of linear relations. They arrived to this problem studding indecomposable representations of SL$(2,\mathbb C)$. Using their classification of $(\cal A,B)$,
Schr\"{o}er \cite{sch} classified the irreducible components of the varieties $V(n,a,b)$ of pairs $(A,B)$ of $n\times n$ matrices satisfying
$
AB=BA=A^a=B^b=0.
$

(ii) Nazarova and Roiter \cite{n-r} classified finitely generated modules over a dyad $D$ of two local Dedekind rings. In the subsequent paper \cite{n-r-s-b}, Bondarenko, Nazarova, Roiter, and Sergeichuk   corrected two inaccuracies in \cite{n-r} and derived classifications
\begin{itemize}
  \item
of
finite $p$-groups
possessing an abelian subgroup of index $p$, by
taking $D=\mathbb Z_p[x]/(x^p)$ where $\mathbb Z_p$ is the ring of $p$-adic numbers, and

  \item
of pairs $(\cal A,B)$ of mutually annihilating operators over \emph{any} field $\mathbb F$ by
taking $D=\mathbb F[x,y]/(xy)$.
\end{itemize}
It is a very  curious circumstance that two classification problems,  so  unlike  at  first glance, admit of a  like  solution.  Levy \cite{lev,lev1} extended the classification of modules over $D$ to modules over Dedekind-like rings.

(iii) Laubenbacher and Sturmfels \cite{lau} also derived a classification of
$(\cal A,B)$ from a classification of finitely generated modules over $D=\mathbb F[x,y]/(xy)$. They used the presentation of each finitely generated module $M$ over $D=\mathbb F[x,y]/(xy)$ as a quotient of a free module
\[
D^n\stackrel{f}{\longrightarrow} D^m\longrightarrow M \longrightarrow 0,
\]
where $f$ corresponds to a matrix $A(x, y)$ with entries in $D$. This presentation is nonunique: $f$ can be multiplied on the left by an automorphism of $D^m$ and on the right by an automorphism of $D^n$. Each automorphism of $D^m$ is given by a nonsingular matrix over $D$; thus, a polynomial matrix $A(x, y)$ can be reduced by elementary transformations over $D$. (By an analogous method, the problem of classifying finitely generated modules over \emph{any} finite dimensional algebra can be reduced to a matrix problem, see \cite{dro2} and \cite[Section 2.5]{ser_can}.) Laubenbacher and Sturmfels \cite{lau} developed an \emph{algorithm} that transforms
$A(x, y)$ to a normal form, which is analogous to the Smith normal form for a matrix over a polynomial ring in one
variable. Their algorithm partially uses
the matrix reduction carried out in \cite{n-r-s-b}. The ring $D=\mathbb F[x,y]/(xy)$ also appears in a variety of other contexts,
such as $K$-theory \cite{den,hes} and algebraic geometry \cite[Lemma 4.5]{kl}.
\end{remark}

\begin{remark}
The classification of pairs of mutually annihilating operators is a bit surprise because
\begin{itemize}
  \item
the problem of classifying \emph{arbitrary} pairs of operators $(\cal A,B)$ is considered as hopeless since it contains the problem of classifying \emph{any} system of linear operators (i.e., representations of an arbitrary quiver); see, for example, \cite{bel-ser_compl,gel-pon}, and

  \item
the commutativity condition ${\cal A}{\cal B}={\cal B}{\cal A}$ does not simplify the problem of classifying $(\cal A,B)$ since by \cite{gel-pon} the classification of pairs of commuting operators implies the classification of pairs of arbitrary operators. Indeed, two pairs $(A,B)$ and $(C,D)$ of $n\times n$ matrices are similar if and only if two pairs of commuting and nilpotent matrices
\[\arraycolsep=1mm
\left(\begin{bmatrix}
0&0&0&0\\0&0&0&0\\I&0&0&0\\0&I&A&0
\end{bmatrix}, \begin{bmatrix}
0&0&0&0\\I&0&0&0\\0&0&0&0\\0&B&I&0
\end{bmatrix}\right),\quad
\left(\begin{bmatrix}
0&0&0&0\\0&0&0&0\\I&0&0&0\\0&I&C&0
\end{bmatrix}, \begin{bmatrix}
0&0&0&0\\I&0&0&0\\0&0&0&0\\0&D&I&0
\end{bmatrix}\right)
\]
are similar
(all blocks are $n\times n$).
\end{itemize}
Nevertheless, Belitskii's algorithm \cite{bel1,ser_can} converts an arbitrary pair $(A,B)$ of $n\times n$ matrices to some pair $(A_{\text{can}},B_{\text{can}})$ by similarity transformations such that two pairs $(A,B)$ and $(A',B')$ are similar if and only if \[(A_{\text{can}},B_{\text{can}})= (A'_{\text{can}},B'_{\text{can}}).\] Thus, the pair $(A_{\text{can}},B_{\text{can}})$ can be considered as a canonical form of $(A,B)$ for similarity, but there is no satisfactory description of the set of matrix pairs $(A_{\text{can}},B_{\text{can}})$.
The algorithm presented in Sections  \ref{sub1}--\ref{sub3} is a special case of Belitskii's algorithm.
\end{remark}

\section{Canonical form of matrices of a pair of mutually annihilating operators}
\label{s1}

All vector spaces and matrices that we consider are over an arbitrary field $\mathbb F$.

Let us define two types of pairs of mutually
annihilating operators
\begin{equation*}\label{gro}
 {\cal A}:V\to V,\quad {\cal
B}:V\Rightarrow V,\qquad
 {\cal A} {\cal B} ={\cal B} {\cal A}=0,
\end{equation*}
on a vector space $V$ (to distinguish the operators, we use a
\emph{double arrow} $\Rightarrow$ for $\cal B$).

\begin{definition}\label{kie}
A pair of mutually annihilating
operators  ${\cal A}:V\to V$ and ${\cal
B}:V\Rightarrow V$ is of \emph{path type} if it is defined as follows. Let
\begin{equation}\label{jut}
1 \lin
2\lin 3\lin \cdots\lin (t-1)
\lin t \qquad (t\ge 1)
\end{equation}
be any path graph
in which every edge is an
ordinary arrow $\longrightarrow$  or a
double arrow $\Longleftarrow$ (with this
orientation). Take
\[V:=\mathbb Fe_1\oplus \mathbb
Fe_2\oplus\dots\oplus\mathbb Fe_t\]
and define the action of $\cal A$ and $\cal B$ on the basis vectors
$e_1,\dots,e_t$
by \eqref{jut}, in which every vertex $i$ is replaced by $e_i$ and the unspecified action is zero. The matrix pair
\begin{equation}\label{yte}
 (A,B)
\end{equation}
that gives $\cal A$ and $\cal B$ in the basis $e_1,\dots,e_t$ is called a \emph{matrix pair of path type}.
\end{definition}

Clearly the pair \eqref{yte} is formed by mutually annihilating $t\times t$ matrices $A=[a_{ij}]$ and $B=[b_{ij}]$, in which
\[
\left.\begin{matrix}
a_{i+1,i}=1&\text{if $i\longrightarrow (i+1)$}\\
b_{i,i+1}=1&\text{if $i\Longleftarrow (i+1)$}
\end{matrix}\right\}\text{ in } \eqref{jut},\ \ i=1,\dots,t-1,
\]
and the other entries are zero. Note that
\[
A+B^T=\begin{bmatrix}
0&&&0\\1&0&&\\&\ddots&\ddots&\\0&&1&0\\
\end{bmatrix}.
\]

\begin{example}
The path  graph
\[
1\longrightarrow 2 \longrightarrow 3 \Longleftarrow 4
\]
defines the following action of $\cal A$ and $\cal B$ on the basis vectors:
\[
e_1\stackrel{\cal A}{\longrightarrow} e_2 \stackrel{\cal A}{\longrightarrow} e_3 \stackrel{\;\cal B}{\Longleftarrow} e_4
\]
and the pair $(\cal A,B)$ is
given by the matrix pair
\[
(A,B)=\left(\begin{bmatrix}
0&0&0&0\\1&0&0&0\\0&1&0&0\\0&0&0&0\\
\end{bmatrix},\, \begin{bmatrix}
0&0&0&0\\0&0&0&0\\0&0&0&1\\0&0&0&0\\
\end{bmatrix}\right).
\]
\end{example}

Recall that every square matrix $A$ is similar
to a direct sum of {\it
Frobenius blocks}
\begin{equation}\label{3}
\begin{bmatrix} 0&&
0&-c_n\\1&\ddots&&\vdots
\\&\ddots&0&-c_2\\
0&&1& -c_1 \end{bmatrix},
\end{equation}
in which $p(x)^l=x^n+c_1
x^{n-1}+\dots+ c_n$ is an
integer power of a polynomial
$p(x)$ that is irreducible over
$\mathbb F$ (note that $p(x)^l$ is the minimal polynomial of \eqref{3}). This direct sum is uniquely determined by $A$, up to permutation of summands; see \cite[Section 14]{pra}. If $\mathbb F$ is algebraically closed then $p(x)=x-\lambda $ and the reader may use the $n$-by-$n$ Jordan block
\begin{equation*}\label{jye}
J_n(\lambda):=\begin{bmatrix}
\lambda  &&&0\\1&\lambda &&
\\&\ddots&\ddots&
\\ 0&&1&\lambda
\end{bmatrix}
\end{equation*}
instead of \eqref{3} in all the statements of this paper.

\begin{definition}
 \label{rsi}
 A pair of mutually annihilating
operators ${\cal A}:V\to V$ and ${\cal
B}:V\Rightarrow V$ is of \emph{cycle type} if it is defined as follows.
\begin{itemize}
  \item[(i)]
Let
\begin{equation}\label{jut1}
\begin{picture}(0,0)(54,0)
\put(0,0){$1\lin 2\lin \cdots\lin t$}
\qbezier(10,-1.67)(51,-20)(92,-1.67)
\end{picture}
\end{equation}
\\
be a cycle graph in which every straight
edge is $\longrightarrow$ or $\Longleftarrow$
and the arcuated edge is $\longleftarrow$ or
$\Longrightarrow$ (with this orientation).

  \item[(ii)]
Let this graph be \emph{aperiodic}, this means that the cyclic renumbering of its vertices
\\[-9mm]

\begin{equation}\label{kiyi}
\begin{picture}(0,0)(150,0)
\put(0,0){$1\lin 2\lin \cdots\lin t \quad \to\quad i\lin (i+1)\lin \cdots\lin (i-1) $}
\qbezier(10,-1.67)(51,-20)(93,-1.67)
\qbezier(150,-1.67)(205,-20)(261,-1.67)
\end{picture}
\end{equation}
\\
is not an isomorphism for each $i=2,\dots, t$. In other words, for each nontrivial rotation of this cyclic graph there is an ordinary or double arrow that is mapped to a double or, respectively, ordinary arrow.

  \item[(iii)]
By (ii), if \eqref{jut1} has no double arrow, then it is the loop $\circlearrowleft$; we associate with its arrow a
nonsingular Frobenius block
$\Phi$ (or a nonsingular Jordan block if \/$\mathbb
F$ is algebraically closed). If the graph has a
double arrow, then we choose any double arrow and associate $\Phi$ with it.
\end{itemize}
Let  $k\times k$ be the size of $\Phi$. Define the action of $\cal A$ and $\cal B$ on the $kt$-dimensional vector space
\begin{equation*}\label{gtr}
V:=V_1\oplus\dots\oplus V_t,\qquad
V_i:=\mathbb
Fe_{i1}\oplus\dots\oplus\mathbb
Fe_{ik},
\end{equation*}
by \eqref{jut1}, in which each
vertex $i$ is replaced by $V_i$
and each arrow represents the linear mapping of the corresponding vector spaces. This linear mapping is given by $\Phi$ if the arrow has been associated with $\Phi$; otherwise, it is given by
the identity matrix $I_k$.

The matrix pair
$ (A,B)$
that gives $\cal A$ and $\cal B$ in the basis
\[
e_{11},\, \dots,\,e_{1k};\,\dots;\,e_{t1},\, \dots,\,e_{tk}
\]
is called a \emph{matrix pair of cycle type}.
Thus,
\begin{itemize}
  \item
if $t=1$ and the loop $1\lin 1$ is an ordinary arrow (which is associated with $\Phi$), then $(A,B)=(\Phi,0_k)$;

  \item
if $t=1$ and the loop $1\lin 1$ is a double arrow  (which is associated with  $\Phi$), then $(A,B)=(0_k,\Phi)$;

  \item
if $t\ge 2$, then $A=[A_{ij}]$ and $B=[B_{ij}]$ are
block matrices (consisting of $t^2$ blocks and each block is of size  $k\times k$), in which for $i=1,\dots,t$:
\begin{equation}\label{gre}
 A_{i+1,i}=
      I_k\quad \text{if $i\longrightarrow (i+1)$},
  \qquad
B_{i,i+1}=
  \begin{cases}
    I_k & \text{if $i\Longleftarrow (i+1)$} \\
    \Phi  & \text{if $i\stackrel{\;\Phi }{\Longleftarrow} (i+1)$}
  \end{cases}
\end{equation}
(if $i=t$ then all $i+1$ in \eqref{gre} are replaced by $1$); the other blocks of $A$ and $B$ are zero. Note that
\[
A+B^T=\begin{bmatrix}
0_k&\dots&\dots&0_k&*\\ *&0_k&&&0_k\\
0_k&*&0_k&&\vdots\\
\vdots&\ddots&\ddots&\ddots&\vdots\\
0_k&\dots&0_k&*&0_k
\end{bmatrix}\qquad (t^2 \text{ blocks})
\]
in which $0_k$ is the $k\times k$ zero matrix, one star is $\Phi $ and the others are $I_k$.
\end{itemize}

\end{definition}

\begin{example}
The cycle graph\\[-7mm]

\begin{equation}\label{huo}
\begin{picture}(0,0)(55,0)
\put(0,0){$1 \Longleftarrow 2\stackrel{\;\Phi}{\Longleftarrow} 3 \longrightarrow 4$}
\put(10,-2){\vector(-3,1){.07}}
\qbezier(10,-2)(50.5,-20)(91,-2)
\end{picture}
\end{equation}
\\
($\Phi$ is 3-by-3) defines the following action of $\cal A$ and $\cal B$ on the basis $e_{i1},\dots, e_{ik}$ of each space $V_i$:\\[-7mm]

\[
\begin{picture}(0,0)(62,0)
\put(0,0){$V_1 \stackrel{\;I_3}{\Longleftarrow} V_2\stackrel{\;\Phi}{\Longleftarrow} V_3 \stackrel{\;I_3}{\longrightarrow} V_4$}
\put(16,-2){\vector(-3,1){.07}}
\qbezier(16,-2)(62,-20)(108,-2)
\put(62,-20){$\scriptstyle I_3$}
\end{picture}
\]
\\[4mm]
and the pair $(\cal A,B)$ is
given by the matrix pair
\[(A,B)=\left(
\begin{bmatrix}
0_3&0_3&0_3&I_3\\0_3&0_3&0_3&0_3 \\0_3&0_3&0_3&0_3
\\0_3&0_3&I_3&0_3\\
\end{bmatrix},\, \begin{bmatrix}
0_3&I_3&0_3&0_3\\0_3&0_3&\Phi&0_3
\\0_3&0_3&0_3&0_3
\\0_3&0_3&0_3&0_3\\
\end{bmatrix}\right).
\]
\end{example}

Let ${\cal P}:=({\cal A}, {\cal B})$ and ${\cal P}':=({\cal A}', {\cal B}')$ be two pairs of linear operators on vector spaces $V$ and $V'$, respectively. Define their \emph{direct sum}
\begin{equation*}\label{rwe}
({\cal A},{\cal B})\oplus({\cal A}',{\cal B}')
:= ({\cal A}\oplus{\cal A}', {\cal B}\oplus{\cal B}')\quad \text{on } V\oplus V'.
\end{equation*}
We say that ${\cal P}$ is \emph{isomorphic} to ${\cal P}'$ if there exists a linear bijection $\varphi: V\to V'$ transforming ${\cal P}$ to ${\cal P}'$; that is,
\[
\varphi{\cal A}= {\cal A}'\varphi,\qquad \varphi{\cal B}= {\cal B}'\varphi.
\]

\begin{theorem}\label{theor}
{\rm(a)}
Let $\cal A$ and $\cal B$ be two
 linear operators on a vector space
 over any field $\mathbb F$, and let
\begin{equation}\label{ffg}
{\cal A}{\cal B}={\cal B}{\cal A}=0.
\end{equation}
Then $({\cal A},{\cal B})$ is isomorphic to a direct sum of
pairs of path and cycle types and
this sum is uniquely determined by $({\cal A},{\cal B})$, up to
\begin{itemize}
  \item[\rm(i)]
permutation of direct summands and
  \item[\rm(ii)]
replacing any summand given by a cycle graph
\eqref{jut1}
with the pair given by any
cycle graph obtained from  \eqref{jut1}
\begin{itemize}
  \item
by a cyclic renumbering of its vertices \eqref{kiyi}
and/or
  \item
if there are at least two double arrows then by transferring
$\Phi $ $($associated with one double arrow$)$ to another double arrow.
\end{itemize}
\end{itemize}

{\rm(b)}
Each pair $(A,B)$ of mutually annihilating matrices
\begin{equation}\label{ffg1}
AB=BA=0
\end{equation}
 is similar to  a direct sum of matrix
pairs of path and cycle types and
this sum is uniquely determined by $(A,B)$, up to transformations {\rm(i)} and {\rm(ii)}.
\end{theorem}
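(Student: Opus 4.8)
The plan is to prove part (b) by a direct matrix reduction using only similarity transformations \eqref{fek1}, and then deduce part (a) as the coordinate-free reformulation. Since $AB=BA=0$, both $A$ and $B$ restrict to nilpotent-like behaviour on complementary pieces, but we cannot simply split $V$ as $\ker A\oplus\operatorname{im}A$; instead I would begin by choosing a basis adapted to the \emph{Fitting-type} decomposition for one operator. Note that $A+B^{T}$ is, in the canonical forms, essentially a single nilpotent Jordan-type matrix (as the remark after Definition~\ref{kie} and the remark in Definition~\ref{rsi} emphasize), so the combinatorial skeleton of the canonical form is a disjoint union of paths and cycles; the content of the proof is that an arbitrary $(A,B)$ with $AB=BA=0$ can be brought to that skeleton together with at most one Frobenius block sitting on each cycle.

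The key steps, in order, would be: (1) \textbf{Separate the "semisimple-for-$A$" part.} Because $AB=0$, the image of $B$ lies in $\ker A$, and because $BA=0$, the image of $A$ lies in $\ker B$; use these inclusions to write $V=V_{0}\oplus V_{1}$ where $A$ is invertible on a part of $V_{1}$ isolated by the Fitting decomposition of $A$, and show that on that invertible part $B$ must vanish, producing summands $(\Phi,0_{k})$ — the loop-type cycles with an ordinary arrow. Symmetrically isolate the summands $(0_{k},\Phi)$. (2) \textbf{Reduce to the case where both $A$ and $B$ are nilpotent.} After removing the two families above, we may assume $A$ and $B$ are both nilpotent; then $A+B^{T}$ is nilpotent, and choosing a Jordan basis for a suitable combination lets us index basis vectors along a disjoint union of paths. (3) \textbf{Track the action of $A$ and $B$ along each path.} On each Jordan string, $A$ moves one way and $B^{T}$ the other; the condition $AB=BA=0$ forces that at each vertex at most one of $A,B$ acts nontrivially, which is exactly the path-graph bookkeeping of Definition~\ref{kie}. (4) \textbf{Handle the cyclic closure.} When a string closes up into a cycle, the composite of all the elementary maps around the cycle is some invertible operator; reduce it by similarity to a single Frobenius block $\Phi$ placed on one chosen edge, using the freedom in \eqref{fek1} that fixes the rest of the cycle — this is where transformation (ii) and the aperiodicity condition (ii) of Definition~\ref{rsi} enter, since a periodic labelling would let us "rotate away" part of $\Phi$ and would spoil uniqueness. (5) \textbf{Uniqueness.} Show the multiset of path graphs, cycle graphs, and the conjugacy class of each $\Phi$ is an invariant: the path summands are recovered from ranks and kernels of words in $A,B$ (equivalently from the Jordan structure of $A+B^{T}$ restricted to appropriate invariant subspaces), and each $\Phi$ is recovered, up to similarity, as the return map around its cycle; the listed ambiguities (i) and (ii) are precisely the symmetries of this data.

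The main obstacle I expect is step (4): controlling the "return map" around a cycle and proving that the only residual invariant is the similarity class of one Frobenius block, with \emph{all} the Frobenius blocks on the other double arrows of that cycle simultaneously reducible to $I_{k}$. This requires a careful argument that the similarity transformations preserving the path/cycle skeleton act on the collection of edge-maps by independent left/right multiplications that can be composed to concentrate all the distortion onto a single edge — and that the aperiodicity hypothesis is exactly what prevents a further collapse. A secondary technical point is carrying out steps (1)–(3) purely by the restricted similarity \eqref{fek1} (rather than by the more permissive $GL\times GL$ equivalence), so that the reduction is genuinely constructive as claimed in the abstract; this forces one to commute the basis changes for $A$ past those for $B$, using $AB=BA=0$ repeatedly to guarantee the cross-terms vanish. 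Finally, the deduction of part (a) from part (b) is routine: pick any basis of $V$, apply (b) to the matrix pair, and translate the matrix direct-sum decomposition back into an isomorphism of operator pairs, noting that similarity of matrix pairs is exactly isomorphism of the associated operator pairs.
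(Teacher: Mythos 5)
There is a genuine gap at the heart of your proposal, in steps (2)--(4). The matrix $A+B^{T}$ is not equivariant under the simultaneous similarity \eqref{fek1} (conjugating $B$ by $S$ conjugates $B^{T}$ by $S^{-T}$, not by $S$), so ``choosing a Jordan basis for $A+B^{T}$'' is not an admissible operation; moreover the claim that $A+B^{T}$ is nilpotent once $A$ and $B$ are is false: for every cycle-type pair (e.g.\ the paper's example with the graph \eqref{huo}) both $A$ and $B$ are nilpotent while $A+B^{T}$ is a block permutation matrix with nonsingular blocks, hence nonsingular. If you replace it by $A+B$, which is equivariant and is nilpotent whenever $A$ and $B$ are (since $AB=BA=0$ gives $(A+B)^{n}=A^{n}+B^{n}$), the real difficulty remains untouched: a Jordan basis of $A+B$ does not split each edge map into ``purely $\cal A$'' or ``purely $\cal B$'', and a cycle-type summand is not a ``Jordan string that closes up''---the Jordan strings of a nilpotent operator terminate at $0$ and never close into cycles. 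Producing a basis in which every basis vector is mapped by $\cal A$ and by $\cal B$ to another basis vector or to zero (with one Frobenius block $\Phi$ on a chosen edge of each cycle) is the entire content of the theorem, and your outline supplies no mechanism for it. The paper's route is different: it puts $A$ alone into Jordan form $J^{\text{\it +}}$ as in \eqref{kids}, observes that \eqref{ffg1} confines all nonzero entries of $B$ to the submatrix $D$ of \eqref{jyt6}, shows that the similarity transformations stabilizing $J^{\text{\it +}}$ induce on $D$ the chessboard matrix problem of Definition \ref{def} (Lemma \ref{lem9n}), and solves that problem by a nontrivial induction (Lemma \ref{lemu}) that at each step peels off one nonsingular block and re-creates the same problem on a smaller matrix. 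Only after this reduction does the path/cycle graph $\Gamma$ appear, and the ``return map'' of your step (4) is then handled by an explicit choice of transition matrices $S_{1},\dots,S_{t}$ that concentrates all edge maps but one into identities, using aperiodicity exactly as you anticipate.

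Your step (1) and the deduction of (a) from (b) match the paper's Lemma \ref{lem9} and the final paragraph of Section \ref{sub3} and are fine in outline (although the paper splits off only the nonsingular part of $A$; the summands $(0_{k},\Phi)$ fall out later as double-arrow loops). Your uniqueness step (5) is asserted rather than proved; the paper gets it from the Krull--Schmidt theorem together with a determination of which indecomposable summands are isomorphic. Without a concrete replacement for the chessboard reduction, the proposal does not constitute a proof.
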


For example, the cycle graph \eqref{huo} and the cycle graph
\\[-7mm]

\[
\begin{picture}(0,0)(50,0)
\put(0,0){$4 \stackrel{\;\Phi}{\Longleftarrow} 1\; {\Longleftarrow}\; 2 \longrightarrow 3$}
\put(10,-2){\vector(-3,1){.07}}
\qbezier(10,-2)(50,-20)(91,-2)
\end{picture}
\]
\\
give isomorphic pairs of cyclic type.

\begin{remark}
\begin{itemize}
  \item[\rm(a)]
The pair of path type given by \eqref{jut} can be also given briefly by
the sequence
\[(c_1,\dots,c_{t-1})
\]
in which
\begin{equation}\label{ytj}
c_i:=
  \begin{cases}
    1 & \text{if the $i$th
arrow is ordinary}, \\
    2 & \text{if the $i$th
arrow is double}.
  \end{cases}
\end{equation}

  \item[\rm(b)]
The pair of cycle type given by \eqref{jut1} can be also given,  up to change of basis, by the system
\[
(c_1,\dots,c_{t}; \Phi )
\]
in which the sequence
$(c_1,\dots,c_{t})$ (defined by \eqref{ytj})
is aperiodic and is determined up to cyclic permutation.

\end{itemize}
\end{remark}

In the remaining sections we construct an algorithm that converts a pair $(A,B)$ of mutually annihilating matrices to its canonical form defined in Theorem \ref{theor}(b).
\begin{itemize}
  \item
In Section \ref{sub1} we reduce the general case to the case of nilpotent $A$, convert $A$ to its Jordan canonical form, restrict ourselves to those similarity transformations that preserve $A$, and show that they induce on some submatrix $D$ of $B$ (containing all nonzero entries of $B$) a matrix problem solved in \cite{n-r,n-r-s-b}.

  \item
In Section \ref{sub2} we apply the reduction described in \cite{n-r,n-r-s-b} and transform $D$ to a block form such that each horizontal or vertical strip contains at most one nonzero block, and this block is nonsingular.
  \item
In Section \ref{sub3}, extending the partition of $D$ into blocks, we find a block form of $A$ and $B$ such that each horizontal or vertical strip contains at most one nonzero block, and this block is nonsingular. This implies the decomposition of the corresponding  operator pair $({\cal A},{\cal B})$ into a direct sum of
pairs of path and cycle types, which proves Theorem \ref{theor}.

\end{itemize}

\section{Reduction to a chessboard matrix problem}\label{sub1}

Let us start to reduce a pair $(A,B)$ of mutually annihilating matrices by similarity transformations \eqref{fek1} to its canonical form described in Theorem \ref{theor}(b).

\begin{lemma}\label{lem9}
{\rm(a)}
Each pair of mutually annihilating matrices $(A,B)$ is similar to a direct sum
\begin{equation}\label{teu}
(A',B')\oplus(\Phi_1,0_{n_1})\oplus\dots\oplus (\Phi_r,0_{n_r}),
\end{equation}
in which $A'$ is nilpotent and each $\Phi_i$ is an $n_i\times n_i$ nonsingular Frobenius block.

{\rm(b)} This direct sum is uniquely determined by $(A,B)$, up to permutation of summands and replacement of $(A',B')$ by a similar pair $($i.e., by a pair obtained by similarity transformations$)$.
\end{lemma}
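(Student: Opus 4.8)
The plan is to exploit the Fitting decomposition of the vector space $V$ with respect to the operator $\cal A$. Since $\cal A\colon V\to V$ is a linear operator on a finite dimensional space, we have the canonical decomposition $V=V_0\oplus V_1$ into $\cal A$-invariant subspaces, where $V_0=\bigcup_k\ker{\cal A}^k$ is the generalized kernel (on which $\cal A$ acts nilpotently) and $V_1=\bigcap_k{\cal A}^kV$ is the generalized image (on which $\cal A$ acts invertibly). The first thing to check is that this decomposition is also $\cal B$-invariant. This follows from ${\cal A}{\cal B}={\cal B}{\cal A}=0$: if $v\in V_0$, say ${\cal A}^kv=0$, then ${\cal A}({\cal B}v)={\cal B}({\cal A}v)$ and more directly ${\cal A}({\cal B}v)=({\cal A}{\cal B})v=0$, so ${\cal B}V_0\subseteq\ker{\cal A}\subseteq V_0$; and for $v\in V_1$ we write $v={\cal A}^kw$ for all $k$, whence ${\cal B}v={\cal B}{\cal A}^kw=({\cal B}{\cal A}){\cal A}^{k-1}w=0$, so in fact ${\cal B}V_1=0$. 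Thus $({\cal A},{\cal B})=({\cal A}|_{V_0},{\cal B}|_{V_0})\oplus({\cal A}|_{V_1},0)$, the first summand having nilpotent $\cal A$ and the second having $\cal B=0$ and invertible $\cal A$; note the restricted pair on $V_0$ still satisfies the mutual annihilation condition, and the pair on $V_1$ trivially does.

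Next I would translate this operator decomposition into the matrix statement (a). Passing to a basis adapted to $V_0\oplus V_1$, the matrix pair $(A,B)$ is similar to $(A',B')\oplus(A'',0)$ with $A'$ nilpotent and $A''$ nonsingular. Then I apply the rational canonical form to $A''$ alone (Frobenius normal form, as recalled in the excerpt around~\eqref{3}): $A''$ is similar to a direct sum of Frobenius blocks, and since $A''$ is nonsingular all these blocks are nonsingular, i.e.\ each has minimal polynomial $p(x)^l$ with $p(x)\ne x$. Conjugating only inside the $V_1$-block (which leaves the zero $\cal B$-component untouched) produces the form $(\Phi_1,0_{n_1})\oplus\dots\oplus(\Phi_r,0_{n_r})$ for the second summand, giving~\eqref{teu}.

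For the uniqueness statement (b), the key point is that the Fitting decomposition is canonical: $V_0$ and $V_1$ are intrinsically defined from $\cal A$, hence any isomorphism of pairs $({\cal A},{\cal B})\cong(\tilde{\cal A},\tilde{\cal B})$ carries $V_0$ to $\tilde V_0$ and $V_1$ to $\tilde V_1$ and restricts to isomorphisms of the two summands. So the splitting into ``nilpotent-$\cal A$ part'' and ``invertible-$\cal A$, zero-$\cal B$ part'' is forced. Within the invertible part, $\cal B$ is necessarily $0$, so only the conjugacy class of $A''$ matters, and the multiset $\{\Phi_1,\dots,\Phi_r\}$ of nonsingular Frobenius blocks is exactly the invariant furnished by the uniqueness of rational canonical form (up to permutation of summands). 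The nilpotent part $(A',B')$ is then determined up to simultaneous similarity, which is precisely the allowed replacement in the statement. I expect the main (minor) obstacle to be the bookkeeping verification that the restriction $(A'',0)$ genuinely has $\cal B$-component zero rather than merely nilpotent-compatible --- but as shown above this is immediate from ${\cal B}{\cal A}=0$ together with $A''$ invertible, so there is really no deep difficulty; the lemma is essentially the Fitting lemma applied to the first operator of a mutually annihilating pair.
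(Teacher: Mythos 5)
Your proposal is correct and follows essentially the same route as the paper: both split off the Fitting (nilpotent/invertible) decomposition of the first operator, use $\mathcal{AB}=\mathcal{BA}=0$ to force the second operator to vanish on the invertible part, apply the Frobenius canonical form there, and derive uniqueness from the fact that any intertwiner respects this decomposition (the paper phrases this as $R=R'\oplus R''$, you as canonicity of the Fitting subspaces). Your verification that the Fitting subspaces are $\mathcal B$-invariant is a detail the paper leaves implicit, but the argument is the same.
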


\begin{proof} (a)
There is a nonsingular $S$ such that
\begin{equation*}\label{ftyik}
S^{-1}(A,B)S=(A',B')\oplus(A'',B''),
\end{equation*}
where $A'$ is nilpotent and $A''$
is nonsingular. By \eqref{ffg1}, $B''=0$. Converting $A''$ to its Frobenius canonical form $\Phi_1\oplus\dots\oplus \Phi_r$, we obtain \eqref{teu}.

(b) Let
\[
R^{-1}((A',B')\oplus(A'',0))R =(C',D')\oplus(C'',0)
\]
where $C'$ is nilpotent and $C''$ is nonsingular. Then \[(A'\oplus A'')R=R(C'\oplus C'')\]
implies $R=R'\oplus R''$, and so \[(A',B')R'=R'(C',D'),\qquad
A''R''=R''C''.
\]
\vskip-1em
\end{proof}

Thus, we can suppose that $A$ is nilpotent. Then $0$ is the only eigenvalue of $A$, and so we can
reduce $A$ to its Jordan canonical form $J$  over any $\mathbb F$. Combine all Jordan
blocks of the same size into one
block, and obtain
\begin{equation}\label{kids}
J^{\text{\it +}}
:=J_{m_1}(0_{r_1})\oplus\dots \oplus
J_{m_t}(0_{r_t}),\qquad m_1< m_2<\dots<
m_t,
\end{equation}
in which
\begin{equation}\label{gte}
J_{m_i}(0_{r_i}):=\begin{bmatrix}
0_{r_i} &&&0\\I_{r_i}&0_{r_i}&&
\\&\ddots&\ddots&
\\ 0&&I_{r_i}&0_{r_i}
\end{bmatrix}  \qquad \text{($m_i^2$
blocks)}.
\end{equation}

Making
the same similarity transformations with $B$, we convert $(A,B)$ to some pair $(J^{\text{\it +}},C)$, which is similar to $(A,B)$. By \eqref{ffg1}, \begin{equation}\label{ffg8}
J^{\text{\it +}}C=CJ^{\text{\it +}}=0,
\end{equation}
hence, $C$ has the form
\begin{equation}\label{fry}
 C=\begin{bmatrix}C_{11}&\dots&C_{1t}\\
 \hdotsfor{3}\\
C_{t1}&\dots&C_{tt}\\
\end{bmatrix}
\end{equation}
(partitioned conformally to \eqref{kids}) in which
\begin{equation}\label{uyd}
C_{ij}=\begin{bmatrix}0&0&\dots&0\\
 \hdotsfor{4}\\
0&0&\dots&0\\
D_{ij}&0&\dots&0\\
\end{bmatrix}\qquad (m_im_j\ \text{blocks of size }r_i\times r_j);
\end{equation}
in particular, $C_{ii}$ is partitioned conformally to \eqref{gte}.
Combine all $D_{ij}$ into one matrix
\begin{equation}\label{jyt6}
D:=\begin{bmatrix}D_{11}&\dots&D_{1t}\\
 \hdotsfor{3}\\
D_{t1}&\dots&D_{tt}\\
\end{bmatrix}.
\end{equation}

We will reduce $(J^{\text{\it +}},C)$ by those similarity transformations $ S^{-1}(J^{\text{\it +}},C)S$ that preserve $J^{\text{\it +}}$; that is,
\begin{equation}\label{hytr}
C\mapsto C':=S^{-1}CS, \qquad S^{-1}J^{\text{\it +}}S=
J^{\text{\it +}}.
\end{equation}
Since $(J^{\text{\it +}},C)$ is similar to $(J^{\text{\it +}},C')$, \eqref{ffg8} implies $J^{\text{\it +}}C'=C'J^{\text{\it +}}=0$. Hence the matrix $C'$ has the form defined in \eqref{fry} and \eqref{uyd} with $C_{ij}$ and $D_{ij}$ replaced by $C'_{ij}$ and $D'_{ij}$.

Thus, transformations \eqref{hytr} preserve all (zero) blocks of $C$ outside of $D$.
In Lemma \ref{lem9n} we show that transformations \eqref{hytr} induce on $D$ the following matrix problem (each \emph{matrix problem} is given, by definition, by a set of matrices and a set of admissible transformations with these matrices; the question is to classify the equivalence classes of the set of matrices with respect to these admissible transformations; see \cite[Section 1.4]{g-r}).

 \begin{definition}\label{def}
The \emph{chessboard matrix problem} is given by
\begin{itemize}
  \item[\rm(a)]
the set of all block matrices  $D=[D_{ij}]$, in which some square blocks are scored along the main diagonal such that each horizontal or vertical strip contains at most one scored block, and

  \item[\rm(b)] the set of the following admissible transformations with each $D$:
\begin{itemize}
  \item[(i)] arbitrary elementary transformations within strips with the following restriction: each scored block is reduced by similarity transformations (i.e., we can make an elementary column transformation in any vertical strip, but if it contains a scored block then we must make the inverse row transformation in the horizontal strip containing this scored block);

\item[(ii)]
if $u$ is a column in vertical strip $i$, $v$ is a column in vertical strip $j$, and $i<j$, then we can replace $v$ by $v+\alpha u$, $\alpha\in\mathbb F$;

\item[(iii)]
if $u$ is a row in horizontal strip $i$, $v$ is a row in horizontal strip $j$, and $i<j$, then we can replace $v$ by $v+\alpha u$, $\alpha\in\mathbb F$.
\end{itemize}
Thus, all admissible additions between different strips are from left to right and from top to bottom.
\end{itemize}
\end{definition}

A canonical form with respect to transformations (i)--(iii) was obtained in \cite{n-r-s-b}. In particular, it was shown that each $D$ is reduced by transformations (i)--(iii) to a matrix with additional partition into blocks such that every horizontal or vertical strip contains at most one nonzero block and this block is nonsingular. We recall this reduction in Section \ref{sub2}; it will be used in Section
\ref{sub3}.

\begin{lemma}\label{lem9n}
Let $(A,B)$ be a pair of mutually annihilating matrices in which $A$ is nilpotent. Then $(A,B)$ is similar to some pair $(J^{\text{\it +}},C)$ in which $J^{\text{\it +}}$ is of the form \eqref{kids}.  If we restrict ourselves to those similarity transformations with $C$ that preserve $J^{\text{\it +}}$, then
\begin{itemize}
  \item
we obtain the chessboard matrix problem for the submatrix \eqref{jyt6} whose scored blocks are $D_{11},D_{22},\dots,D_{tt}$;
  \item
the blocks of $C$ outside of $D$ remain zero under these transformations.

\end{itemize}

\end{lemma}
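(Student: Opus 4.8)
The plan is to compute explicitly the group of similarity transformations $S$ that preserve $J^{\text{\it +}}$ and then trace through how such an $S$ acts on the matrix $C$, in particular on the submatrix $D=[D_{ij}]$ collecting the nonzero entries. First I would recall that the centralizer of a nilpotent Jordan matrix is well understood: writing $S$ in block form $S=[S_{ij}]$ conformally to \eqref{kids}, the condition $S J^{\text{\it +}}=J^{\text{\it +}}S$ together with invertibility forces each block $S_{ij}$ to be, up to the natural identification of $J_{m_i}(0_{r_i})$ with $r_i$ Jordan blocks of size $m_i$, a ``banded'' matrix built from a family of $r_i\times r_j$ coefficient blocks; moreover, because $m_1<m_2<\dots<m_t$ are strictly increasing, the diagonal blocks $S_{ii}$ must have invertible leading $r_i\times r_i$ coefficient, while for $i\ne j$ the block $S_{ij}$ is constrained so that the ``overlap'' of the two Jordan structures is respected. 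The key numerical fact I would extract: when one conjugates $C$ (which lives only in the bottom-left corners $D_{ij}$, by \eqref{uyd}) by such an $S$, only finitely many of the coefficient blocks of $S$ can contribute to the transform of $D$, and the resulting action on $D=[D_{ij}]$ is precisely: (1) left multiplication / row operations coming from the $S_{ii}^{-1}$-part, (2) right multiplication / column operations coming from the $S_{ii}$-part — and these two are forced to be mutually inverse on the diagonal blocks $D_{ii}$, giving the similarity restriction of Definition \ref{def}(b)(i); (3) additions of strip $i$ into strip $j$ coming from the off-diagonal coefficient blocks of $S$, and the strict inequality $m_i<m_j$ is exactly what makes these additions one-directional — from the shorter Jordan chain into the longer one — which is items (ii) and (iii).

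Concretely, the key steps in order would be: \textbf{(1)} normalize notation by replacing $J_{m_i}(0_{r_i})$ by $I_{r_i}\otimes J_{m_i}(0)$ (so that a block $S_{ij}$ is an $r_i\times r_j$ array of $m_i\times m_j$ blocks, or better, an $m_i\times m_j$ array of $r_i\times r_j$ blocks $S_{ij}^{(pq)}$); \textbf{(2)} solve $S J^{\text{\it +}}=J^{\text{\it +}}S$ blockwise to show $S_{ij}^{(pq)}$ depends only on $p-q$ (a Toeplitz/banded condition), vanishing when $p-q$ is outside the range dictated by $\min(m_i,m_j)$, so $S$ is determined by coefficient matrices $\sigma_{ij}^{(d)}\in\mathbb F^{r_i\times r_j}$; \textbf{(3)} write the invertibility condition, which (using $m_1<\dots<m_t$) reduces to invertibility of each $\sigma_{ii}^{(0)}$, and observe that $S^{-1}$ has the same shape; \textbf{(4)} compute $S^{-1}CS$ using \eqref{uyd} — since $C$ sits in the bottom row-block of each $C_{ij}$ and $J^{\text{\it +}}C=CJ^{\text{\it +}}=0$ is preserved, the transform again has the shape \eqref{fry}–\eqref{uyd}, so it suffices to read off the induced map on $D$; \textbf{(5)} identify that induced map with transformations (i)–(iii) of Definition \ref{def}, checking the orientation of the one-sided additions against the ordering $m_1<\dots<m_t$ and checking that the scored blocks are exactly $D_{11},\dots,D_{tt}$ and that they undergo similarity; \textbf{(6)} conversely, verify that every admissible transformation (i)–(iii) is realized by some such $S$ — i.e., the correspondence is onto, not merely into — by exhibiting $S$ with the appropriate $\sigma_{ij}^{(d)}$ for each elementary move. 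The second bullet of the lemma (blocks of $C$ outside $D$ stay zero) is immediate from step (4), since the shape \eqref{fry}–\eqref{uyd} is forced by $J^{\text{\it +}}C'=C'J^{\text{\it +}}=0$, which was already noted in the text preceding the lemma.

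The main obstacle I expect is the bookkeeping in steps (2) and (4): correctly tracking which coefficient blocks $\sigma_{ij}^{(d)}$ of $S$ survive in the product $S^{-1}CS$ once one projects onto the $D$-part, and showing that the ``diagonal shift'' $d=0$ terms act by similarity on the scored blocks while the genuinely off-diagonal contributions act by strictly one-directional strip additions. The strict inequalities $m_1<m_2<\dots<m_t$ do almost all the work here — they are what prevent, say, a column of strip $j$ from being added back into strip $i$ for $i<j$ — so the delicate point is to phrase the centralizer computation so that this asymmetry is manifest rather than buried in indices. A clean way to do this, which I would adopt, is to think of $S$ as acting on the ``socle–radical'' filtration of the underlying space: the shorter Jordan blocks (smaller $m_i$) sit ``deeper'' in a way that lets them map into the longer ones but not vice versa, and $D$ is essentially the map induced by $B$ on the top quotient, on which the centralizer acts through the block-upper-triangular quotient group. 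With that picture, steps (5) and (6) become routine verifications rather than computations.
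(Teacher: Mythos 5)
Your proposal is correct and follows essentially the same route as the paper's proof: the paper also writes $S=[S_{ij}]$ conformally to \eqref{kids}, invokes the banded (block-Toeplitz) form of the centralizer of $J^{\text{\it +}}$ (citing Gantmacher rather than rederiving it), and then extracts from $SC'=CS$ the submatrices of $S$ formed by the coefficient blocks at positions $(m_i,m_j)$ and $(1,1)$, which are block lower- and upper-triangular with common diagonal $R_{11},\dots,R_{tt}$ precisely because $m_1<\dots<m_t$, yielding transformations (i)--(iii) on $D$. Your extra step (6) on surjectivity of the correspondence is handled in the paper only implicitly (and via the elementary-transformations appendix), but the argument is the same in substance.
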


\begin{proof}
We reduce $C$ by transformations \eqref{hytr}. Partition $S$ conformally to the partition of $J^{\text{\it +}}$ in \eqref{kids}:
\begin{equation}\label{rok}
S=\begin{bmatrix}S_{11}&\dots&S_{1t}\\
 \hdotsfor{3}\\
S_{t1}&\dots&S_{tt}\\
\end{bmatrix}.
\end{equation}

Since $S$ commutes with $J^{\text{\it +}}$, each $S_{ij}$ has the following form described in \cite[Chapter VIII, \S\,2]{gan}:
\begin{equation}\label{gyi}
\begin{bmatrix}
 R_{ij}&&&&&&&0\\
R'_{ij}&R_{ij}\\
 R''_{ij}&R'_{ij}&R_{ij}&
 \\
 \ddots&\ddots&\ddots&\ddots&
 \\
 \ddots&\ddots&R''_{ij}&R'_{ij}&R_{ij}
 \\
\end{bmatrix}\text{ or } \begin{bmatrix}
&&&&0\\
&\\
 R_{ij}\\
R'_{ij}&R_{ij}\\
 R''_{ij}&R'_{ij}&R_{ij}&
 \\
 \ddots&\ddots&\ddots&\ddots&
 \\
 \ddots&\ddots&R''_{ij}&R'_{ij}&R_{ij}
 \\
\end{bmatrix}
\end{equation}
(every $S_{ij}$ consists of $m_im_j$ blocks of size $r_i\times r_j$).

Substituting \eqref{fry} and \eqref{rok} into $SC'=CS$ and omitting zero entries, we obtain
\[
\begin{bmatrix}R_{11}&&0\\
\vdots&\ddots\\R_{t1}&\dots&R_{tt}
\end{bmatrix}\begin{bmatrix}D'_{11}&\dots&D'_{1t}\\
\hdotsfor{3}\\D'_{t1}&\dots&D'_{tt}
\end{bmatrix}=
\begin{bmatrix}D_{11}&\dots&D_{1t}\\
\hdotsfor{3}\\D_{t1}&\dots&D_{tt}
\end{bmatrix}
\begin{bmatrix}R_{11}&\dots&R_{1t}\\
&\ddots&\vdots\\0&&R_{tt}
\end{bmatrix}
\]
in which the first and the forth matrices are the submatrices of $S$ formed by the blocks of $S_{ij}$ at the positions $(m_i,m_j)$ and  $(1,1)$, respectively. Then
\begin{equation*}\label{gto}
\begin{bmatrix}D'_{11}&\dots&D'_{1t}\\
\hdotsfor{3}\\D'_{t1}&\dots&D'_{tt}
\end{bmatrix}=\begin{bmatrix}R_{11}^{-1}&&0\\
&\ddots&\\ *&&R_{tt}^{-1}
\end{bmatrix}
\begin{bmatrix}D_{11}&\dots&D_{1t}\\
\hdotsfor{3}\\D_{t1}&\dots&D_{tt}
\end{bmatrix}\begin{bmatrix}R_{11}&&*\\
&\ddots&\\ 0&&R_{tt}
\end{bmatrix}
\end{equation*}
where the stars denote arbitrary blocks. Thus, $D$ is reduced by transformations (i)--(iii) from Definition \ref{def}.
\end{proof}

\subsection*{Appendix: A proof of Lemma \ref{lem9n} by elementary transformations}\label{subsub1}

In this appendix we show that Lemma \ref{lem9n} can also be proved by elementary transformations. This primitive proof makes the reduction to chessboard matrix problem clearer, but the reader may omit it.

For simplicity, we assume that all Jordan blocks of $A$ are at most 3-by-3, the general case is considered analogously. Then
\begin{equation*}\label{jut3}
J= \underbrace{J_1(0)\oplus
\dots \oplus J_1(0)}_{\mbox{$p$
times}}\oplus \underbrace{J_2(0)\oplus
\dots \oplus J_2(0)}_{\mbox{$q$
times}}\oplus \underbrace{J_3(0)\oplus
\dots \oplus J_3(0)}_{\mbox{$r$
times}},
\end{equation*}
where $p,q,r$ are natural numbers or zero. The pair $(A,B)$ is similar to $(J^{\text{\it
+}},C)$, in which
\begin{equation}\label{iod}
J^{\text{\it
+}}= J_1(0_p) \oplus
J_2(0_q) \oplus
J_3(0_r)=
\left[\begin{array}{c|cc|ccc}
 0_p& &&&&0\\ \hline
& 0_q&0_q&&&\\
&I_q& 0_q&&&\\  \hline
&&& 0_r&0_r&0_r\\
&&& I_r& 0_r&0_r\\
0&&&0_r&I_r& 0_r
\end{array}\right].
\end{equation}

Since $J^{\text{\it
+}}C=CJ^{\text{\it
+}}=0$,
\begin{equation*}\label{hte}
C=\left[\begin{array}{c|cc|ccc}
 D_{11}&D_{12}&0&D_{13}&0&0\\ \hline
 0&0&0&0&0&0\\
  D_{21}&D_{22}&0&D_{23}&0&0\\ \hline
 0&0&0&0&0&0\\
 0&0&0&0&0&0\\
 D_{31}&D_{32}&0&D_{33}&0&0\\
\end{array}\right].
\end{equation*}

Let us prove that the similarity transformations with $(J^{\text{\it
+}},C)$
that preserve $J^{\text{\it
+}}$ induce on
 \begin{equation*}\label{jyt}
D:=\begin{bmatrix}\lefteqn{\diagdown}D_{11}&D_{12}&D_{13}\\
D_{21}&\lefteqn{\diagdown}D_{22} &D_{23}\\D_{31}&D_{32}& \lefteqn{\diagdown}D_{33}
\end{bmatrix}
\end{equation*}
(in which the blocks $D_{11},D_{22},D_{33}$ are scored)
the chessboard matrix problem.

\begin{itemize}
  \item[(i)] We can make with $D$ transformations (i) from Definition \ref{def} using the following transformations within six horizontal and six vertical strips of $J^{\text{\it
+}}$ and $C$:

\begin{itemize}
  \item
Any elementary column transformation in the first vertical strip of $J^{\text{\it
+}}$ and $C$ simultaneously, and then the inverse row transformation.

  \item
Any elementary column transformation in the second vertical strip of $J^{\text{\it
+}}$ and $C$ and then the inverse row transformation. The latter transformation spoils the identity block at the position (3,2) in \eqref{iod}, we restore it by the inverse row transformation in the third horizontal strip and then make the initial column transformation in the third vertical strip.

  \item
Any elementary column transformation in vertical strips 4, 5, 6 simultaneously, then the inverse row transformation in horizontal strips 4, 5, 6.
\end{itemize}
Thus, $D_{11},\ D_{22}$, and $D_{33}$ are reduced by similarity transformations.

  \item[(ii)]  We can make with $D$ transformations (ii) from Definition \ref{def} as follows.
We can add a column of vertical strip 1 in $D$ to a column of vertical strip 2 or 3 since the corresponding transformation in $J^{\text{\it
+}}$ and the inverse row transformation do not change $J^{\text{\it
+}}$. We can add a column of vertical strip 2 in $D$ to a column of vertical strip 3; the corresponding transformation in $J^{\text{\it
+}}$ may spoil the zero block $(3,4)$,  we restore it by adding rows of horizontal strip 5 and the inverse column transformations do not change $J^{\text{\it
+}}$.

  \item[(iii)]  We can make with $D$ transformations (iii) from Definition \ref{def} as follows. We can add a row of horizontal strip 1 in $D$ to a row of horizontal strip  2 or 3 since the corresponding transformation in $J^{\text{\it
+}}$ does not change $J^{\text{\it
+}}$. We can add a row of horizontal strip 2 in $D$ to a row of horizontal strip 3; the corresponding transformation in $J^{\text{\it
+}}$ may spoil the zero block $(6,2)$, we restore it by adding columns of vertical strip 5.

\end{itemize}

\section{Solving the chessboard matrix problem}\label{sub2}

In this section we prove the following lemma

\begin{lemma}\label{lemu}
Let $D$ be a block matrix in which some square blocks are scored along the main diagonal such that each horizontal or vertical strip contains at most one scored block. Then there is an algorithm that
\begin{itemize}
  \item[\rm(a)]
using transformations {\rm(i)--(iii)} from Definition \ref{def} and
  \item[\rm(b)]
making additional partition of strips into substrips such that the partition of each scored block  into horizontal substrips duplicates its partition into vertical substrips $($i.e., all diagonal subblocks of scored blocks are square$)$
\end{itemize}
transforms $D$ into a matrix $D_0$ partitioned into subblocks such that
\begin{equation}                 \label{lye}
\parbox{25em}
{each horizontal or vertical substrip contains at most one nonzero subblock and this subblock is  nonsingular.}
\end{equation}
\end{lemma}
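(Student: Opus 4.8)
The plan is to present the recursive reduction of \cite{n-r-s-b}, which this section only recalls, and to run an induction on the total number of rows plus columns of $D$. At each stage I will exhibit a nonsingular subblock that is already the only nonzero subblock in its horizontal substrip and in its vertical substrip, together with a way to ``peel it off'': deleting its two substrips leaves a strictly smaller matrix on which the admissible transformations are the restrictions of (i)--(iii) and every scored block is still square, so the smaller matrix is again an instance of the problem. The induction then closes as soon as one such subblock is produced at each stage.

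To produce it, let $k$ be the least index for which horizontal or vertical strip $k$ is still nonempty; then every block $D_{ij}$ with $\min(i,j)<k$ has already been removed, so any collateral change created by pushing rows or columns of strip $k$ forward by the one-sided additions (ii)--(iii) lands only in blocks $D_{\ell\ell'}$ with $\ell,\ell'>k$, which have not yet been touched. If strip $k$ carries no scored block, I reduce its diagonal block to $\left(\begin{smallmatrix}I_s&0\\0&0\end{smallmatrix}\right)$ by row operations inside horizontal strip $k$ and column operations inside vertical strip $k$, then clear the rest of the vertical strip of $I_s$ by adding the rows through $I_s$ to rows of strips $>k$, and the rest of its horizontal strip by adding the columns through $I_s$ to columns of strips $>k$; now $I_s$ is isolated, and I peel it off. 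If strip $k$ carries the scored block $D_{kk}$, I first reduce $D_{kk}$ to its rational canonical form by similarity, refining horizontal strip $k$ and vertical strip $k$ in the same way so that every diagonal subblock of $D_{kk}$ is square, as required by (b). Each non-nilpotent Frobenius summand $\Phi$ is then nonsingular, and since $D_{kk}$ has become block diagonal its vertical and horizontal substrips are cleared off the off-diagonal blocks exactly as above --- using $\Phi^{-1}$ to choose the combinations and observing that no other summand is disturbed --- so each such $\Phi$ becomes an isolated nonsingular subblock and is peeled off. This leaves inside strip $k$ only the nilpotent part $J_{n_1}(0)\oplus\dots\oplus J_{n_p}(0)$.

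Treating this nilpotent part is the step I expect to be the main obstacle. One refines each $J_n(0)$ into one-dimensional substrips; each subdiagonal $1$ then sits at a position $(i+1,i)$ whose column substrip $i$ has, outside $D_{kk}$, only entries in the lower blocks $D_{\ell k}$ and whose row substrip $i+1$ has only entries in the right-hand blocks $D_{k\ell}$, and --- since the restriction of column substrip $i$ to $D_{kk}$ is a single $1$, and likewise for row substrip $i+1$ --- these entries can be killed in a suitable order using only the one-sided additions (ii) and (iii). What is then left inside strip $k$ is a smaller scored block together with the ``free'' first-row and last-column substrips of each $J_n(0)$, and one recurses. The genuine difficulty, and the reason the \cite{n-r-s-b} argument is long, is that the row and column operations inside a scored strip are \emph{linked} --- a column operation forces the inverse row operation --- so reducing such a leftover scored block together with the attached blocks $D_{k\ell}$ (forming a chain on the right) and $D_{\ell k}$ (a chain below) is a coupled two-sided matrix problem rather than plain Gaussian elimination; one has to order the operations so that nothing already cleaned is spoiled and the matrix strictly shrinks at every peel-off, which is what forces termination.
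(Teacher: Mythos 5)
Your overall strategy coincides with the paper's: induct by peeling off isolated nonsingular subblocks, with the case split non-scored block / nonsingular part of a scored block / nilpotent (Jordan) part of a scored block. But two of the steps you assert are exactly where the content of the lemma lies, and one of them as written would fail. In the non-scored case you reduce the \emph{diagonal} block of the first surviving strip to $I_s\oplus 0$. The nonzero blocks of $D$ need not be diagonal: if $D_{kk}=0$ while some $D_{k\ell}$ with $\ell\neq k$ is nonzero, your step peels off nothing ($s=0$) and the recursion stalls with strip $k$ still nonempty. The paper instead deletes zero horizontal strips and otherwise reduces the first \emph{nonzero} block $D_{1k}$ of the first horizontal strip, which is in general off-diagonal (and, being off-diagonal, is reduced by independent row and column operations, not by similarity).

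The second gap is the one you yourself flag as ``the genuine difficulty'': you never verify that after a peel-off the residual matrix $E$ is again an instance of the chessboard problem. This is the key statement \eqref{lut} of the paper's proof, and it is not automatic, because the admissible transformations of $E$ are only those transformations of $\tilde D$ that preserve the peeled-off identities and the already-created zeros; one must compute which ones survive. For the nilpotent scored block this forces (a) arranging the Jordan blocks by strictly decreasing size $m_1>m_2>\dots>m_t$, so that the centralizer \eqref{gyi} of $\tilde D_{1k}$ is block-triangular in the right direction and the induced additions between the new substrips of $E$ go left-to-right and top-to-bottom as (ii)--(iii) of Definition \ref{def} require; and (b) recognizing that the new scored blocks of $E$ are the diagonal zero blocks $0_{r_1},\dots,0_{r_t}$ (one per group of equal-sized Jordan blocks), whose similarity constraint encodes the coupling between the attached blocks $F_i$ and $G_i$ of each group. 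Your refinement into one-dimensional substrips obscures this coupling rather than resolving it, and ``these entries can be killed in a suitable order'' is precisely the claim that needs proof --- the paper establishes it either by the centralizer computation leading to \eqref{gyi} or by the explicit compensating elementary transformations of its appendices. Until \eqref{lut} is proved, the induction does not close.
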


\begin{proof}
We use induction on the size of $D$.
If the first horizontal strip of $D$ is zero then we can delete it reducing the size of $D$.
Hence we can suppose that the first horizontal strip of $D=[D_{ij}]$ is nonzero; let $D_{1k}$ be the first nonzero block.
\medskip

\noindent \emph{Case 1: $D_{1k}$ is not scored}. By transformations (i) from Definition \ref{def} we reduce it to the form
\begin{equation}\label{hyt}
\tilde D_{1k}:=
\left[\begin{array}{c|c}
 I_r&0\\ \hline
0&0
\end{array}\right]
\end{equation}
By adding linear combinations of columns of $D$ that cross $I_r$ (transformations (ii)), we make zero all entries to the right of $I_r$.
By adding linear combinations of rows that cross $I_r$ (transformations (iii)), we make zero all entries under $I_r$. Extending the partition \eqref{hyt}, we divide the first horizontal strip into two substrips and the $k$th vertical strip into two substrips. If the new horizontal or vertical partition goes through the scored block, then we make the perpendicular partition (vertical or horizontal, respectively) such that this block is partitioned into 4 subblocks with square diagonal subblocks scored along the main diagonal. Denote the obtained matrix by $\tilde D$. For example, if the new horizontal partition and the new vertical partition go through scored blocks $F$ and $G$, then
\begin{equation*}\label{fri}
\tilde D=\begin{MAT}(c){3ccc3c0c3ccc3c0c3ccc3}
\first3
 0&\dots&0 & I_r&0 & 0&\dots&0 & \lefteqn{\!\diagdown}0&0 & 0&\dots&0
        \\0
 0&\dots&0 & 0&0 & *&\dots&* & F_{21}&\lefteqn{\,\diagdown}F_{22} & *&\dots&*
        \\3
 && & \begin{matrix}
 0\\[-3mm] \vdots\\[-1mm] 0
 \end{matrix}&\begin{matrix}
 *\\[-3mm] \vdots\\[-1mm] *
 \end{matrix} &&&&&&&&
        \\3
 && & \lefteqn{\!\diagdown}0&G_{12} &&&&&&&&
        \\0
 && & 0& \lefteqn{\,\diagdown}G_{22} &&&&&&&&
        \\3
&& & \begin{matrix}
 0\\[-3mm] \vdots\\[-1mm] 0
 \end{matrix}&\begin{matrix}
 *\\[-3mm] \vdots\\[-1mm] *
 \end{matrix} &&&&&&&&
        \\3
\end{MAT}
\end{equation*}

By the \emph{canonical substrips} we mean the horizontal substrip containing $I_r$ and the vertical substrip containing $I_r$ (we call them ``canonical'' since they are substrips of the canonical form of $D$ with respect to transformations (i)--(iii)). Denote by $E$ the block matrix obtained from $\tilde D$ by deleting the
canonical substrips.

We will reduce $\tilde D$ by those transformations (i)--(iii) (defined with respect to the initial partition of $D$ into blocks) that preserve $\tilde D_{1k}$ and the canonical substrips. Let us show that these transformations induce on $E$ the chessboard matrix problem.
\begin{itemize}
  \item
Using transformations (i) we can add columns of the first vertical substrip that goes through $F$ to columns of the second vertical substrip that goes through $F$. Since $F$ is scored, we must make the inverse row transformation in $F$. This may spoil the zero subblocks of $F$ above $F_{21}$ and $F_{22}$, we restore them by adding linear combinations of columns of $I_r$.

  \item
We can add rows of the first horizontal substrip that goes through $G$ to rows of the second horizontal substrip. The inverse transformation of columns in $G$ may spoil the zero subblocks of  $G$ to the left of $G_{12}$ and $G_{22}$, we restore them by adding linear combinations of rows of $I_r$.
\end{itemize}
By induction on the size, Lemma \ref{lemu} holds for $E$; that is, $E$ is reduced to a block matrix $E_0$ satisfying the condition \eqref{lye}.
Replacing $E$ by $E_0$ in $\tilde D$ and making the additional partitions into subblocks in accordance with the additional partitions in $E_0$, we obtain a block matrix $D_0$ satisfying \eqref{lye}.
\medskip

\noindent \emph{Case 2: $D_{1k}$ is scored and nonnilpotent}. We may reduce it by similarity transformations.

Convert $D_{1k}$ to the form $K\oplus N$ in which $K$ is a nonsingular Frobenius matrix and $N$ is nilpotent (if $D_{1k}$ is nonsingular then $N$ does not appear). Using transformations (ii) and (iii), we make zero all entries to the right of $K$ and under $K$ and obtain the matrix
\begin{equation}\label{fre}
\tilde D:=\begin{MAT}(c){3ccc3c0c3ccc3}
\first3
 0&\dots&0 &\lefteqn{\diagdown}K&0 & 0&\dots&0
        \\0
 0&\dots&0 & 0&\lefteqn{\diagdown}N & *&\dots&*
        \\3
 &*& & \begin{matrix}
 0\\[-3mm] \vdots\\[-1mm] 0
 \end{matrix}&\begin{matrix}
 *\\[-3mm] \vdots\\[-1mm] *
 \end{matrix} &&*&
        \\3
\end{MAT}
\end{equation}
Denote by $E$ the block matrix obtained from \eqref{fre} by deleting
the horizontal and vertical substrips containing $K$.

We will reduce  \eqref{fre} by those transformations (i)--(iii) that preserve the zeros to the right of $K$ and under $K$ and that transform $K$ into a nonsingular matrix and $N$ into a nilpotent matrix. These transformations induce on $E$ the chessboard matrix problem.
By induction on the size, Lemma \ref{lemu} holds for $E$; that is, $E$ is converted to a block matrix $E_0$ satisfying the condition \eqref{lye}.
Replacing $E$ by $E_0$ in \eqref{fre}, we obtain a block matrix $D_0$  satisfying \eqref{lye}.
\medskip

\noindent \emph{Case 3: $D_{1k}$ is scored and nilpotent}.
Reduce it by similarity transformations to the form
\begin{equation}\label{kidu}
\tilde D_{1k}:=
J_{m_1}(0_{r_1})\oplus\dots \oplus
J_{m_t}(0_{r_t}),\qquad m_1> m_2>\dots>
m_t,
\end{equation}
in which $J_{m_i}(0_{r_i})$ is defined in \eqref{gte}. Using transformations (ii) and (iii), make zero all entries to the right of $I_{r_i}$ and under $I_{r_i}$ for each $I_{r_i}$ in \eqref{gte}; which
converts $D$ to the form
\begin{equation}\label{kue}
\tilde D:=\begin{MAT}(@){3c3c3c3}
\first3 &\phantom{a}&\\
 0&\ \ \tilde D_{1k}\ \  &{\cal F}\\ &\phantom{a}&\\3
   \; *\; &{\cal G}&\; *\;
 \\3
\end{MAT}=\begin{MAT}(@){3c3ccc3c3}
\first3
 0&J_{m_1}(0_{r_1})& &0 &{\cal F}_1
 \\
\aligntop
 \vdots&&\ddots& &\vdots\\ &&&&\\
 0&0& &J_{m_t}(0_{r_t}) &{\cal F}_t\\3
 &&&&\\
  \ \ *\ \ &{\cal G}_1&\dots &{\cal G}_t&\ \ *\ \
 \\ &&&&\\3
\end{MAT}
\end{equation}
(we do not draw partitions into strips except for the strips of $\tilde D_{1k}$), in which
\[
{\cal F}_i=\begin{bmatrix}F_i\\ 0\\ \vdots \\0
\end{bmatrix}\ \text{ ($m_i$ strips)},\qquad {\cal G}_i=\begin{bmatrix}0&\dots& 0& G_i
\end{bmatrix}\ \text{ ($m_i$ strips)}
\]
for $i=1,\dots, t$.

By the \emph{canonical substrips} we mean all horizontal and vertical substrips of $\tilde D$ that contain $I_{r_i}$ from $\tilde D_{1k}$. Delete in $\tilde D$ the
canonical substrips and obtain the block matrix
\begin{equation}\label{kui}
E=\begin{MAT}(@){3c3c0c0c3c3}
\first3
 0&\lefteqn{\diagdown}0_{r_1}& \dots&0 &{F}_1\\0\aligntop
 \vdots& \vdots&\ddots&  \vdots&\vdots\\0
 0&0&  \dots&\lefteqn{\diagdown}0_{r_t} &{F}_t\\3
 &&&&\\
  \ \ *\ \ &{G}_1&\dots &{G}_t&\ \ *\ \
 \\ &&&&\\3
\end{MAT}
\end{equation}
partitioned as $D$,
in  which we divide additionally the first horizontal strip and the $k$th vertical strip into $t$ substrips of sizes $r_1,\dots, r_t$; the line  in $\tilde D$  that scores $\tilde D_{1k}$ along its diagonal becomes the line  in $E$ that scores the diagonal blocks $0_{r_1},\dots, 0_{r_t}$ along their diagonals.

Let us show that
\begin{equation}                 \label{lut}
\parbox{25em}
{the transformations (i)--(iii) from Definition \ref{def} with  $\tilde D$ that preserve the canonical substrips and $\tilde D_{1k}$ induce the chessboard matrix problem on $E$.}
\end{equation}

The first horizontal strip of $\tilde D$ is reduced by transformations that preserve $\tilde D_{1k}$:
\begin{equation*}\label{rek}
(S^{-1}\oplus I) \tilde D (I\oplus S\oplus I)=\tilde D', \qquad S^{-1}\tilde D_{1k}S=\tilde D_{1k}.
\end{equation*}
By the latter equality and \eqref{kidu}, $S$ has the form defined in \eqref{rok} and \eqref{gyi}. Then
\[
(S\oplus I)\tilde D'
=\tilde D (I\oplus S\oplus I)
\]
implies
\[
\Big(\begin{bmatrix}R_{11}&&0\\
\vdots&\ddots\\R_{t1}&\dots&R_{tt}
\end{bmatrix}\oplus I\Big)E'=
E \Big(I\oplus \begin{bmatrix}R_{11}&\dots&R_{1t}\\
&\ddots&\vdots\\0&&R_{tt}
\end{bmatrix}\oplus I\Big)
\]
(in which $E'$ is defined by \eqref{kui} with $F_i$ and $G_i$ replaced by $F_i'$ and $G_i'$),
and so
\begin{equation*}\label{hyo}
E'=
\Big(\begin{bmatrix}R_{11}^{-1}&&0\\
&\ddots\\ *&&R_{tt}^{-1}
\end{bmatrix}\oplus I\Big)
E \Big(I\oplus \begin{bmatrix}R_{11}&&*\\
&\ddots&\\0&&R_{tt}
\end{bmatrix}\oplus I\Big),
\end{equation*}
where the stars denote arbitrary blocks. Thus, the substrips of the first horizontal and the $k$th vertical strips of $E$ are reduced by transformations (i)--(iii) from Definition \ref{def}, which proves \eqref{lut}.

By induction on the size, $E$ converts to a block matrix $E_0$ satisfying the condition \eqref{lye}.
Replacing $E$ by $E_0$ in \eqref{kue}, we obtain a block matrix satisfying \eqref{lye}.
\end{proof}

\subsection*{Appendix: A proof of \eqref{lut} by elementary transformations}\label{subsub2}

The key statement in the proof of Lemma \ref{lemu} is \eqref{lut}. In this appendix we derive the statement \eqref{lut} using elementary transformations (the reader may omit it).

For simplicity, we use
\begin{equation*}\label{kia}
\tilde D_{1k}:=
J_{5}(0_{p})\oplus
J_{3}(0_{q})
\end{equation*}
in place of \eqref{kidu}, then the matrix \eqref{kue} takes the form
\begin{equation*}
\tilde D=\begin{MAT}(@){3c3c3c3}
\first3 &\phantom{a}&\\
 0&\ \ \tilde D_{1k}\ \  &{\cal F}\\ &\phantom{a}&\\3
   \; *\; &{\cal G}&\; *\;
 \\3
\end{MAT}=
\begin{MAT}(@){rcccccccccc}
&& \scriptstyle  1& \scriptstyle 2& \scriptstyle 3& \scriptstyle 4   & \scriptstyle 5& \scriptstyle 6 & \scriptstyle 7 & \scriptstyle 8&\\
\scriptstyle  1&0 & 0_p&&&   && & && F_1\\ \scriptstyle  2
&0 & I_p&0_p&&&& & && 0\\ \scriptstyle  3
&0 & &I_p&0_p&&& & 0&& 0\\ \scriptstyle  4
&0 & &&I_p&0_p&& & && 0\\ \scriptstyle  5
&0 & &&&I_p&0_p& & && 0\\ \scriptstyle  6
&0 & & &&& & 0_q&& & F_2\\ \scriptstyle  7
&0 & & &0&& & I_q&0_q&&  0\\
\scriptstyle  8
&0 & & &&& & &I_q&0_q& 0\\ &&& &&& &&& &\\
&\ \ *\ \  &\; 0\; &\; 0\; &\; 0\; &\; 0\; &G_1& \;0\; &\;0\;&G_2&\ \  *\ \ \\ & && &&& &&&&
\addpath{(1,0,3)rrrrrrrrrruuuuuuuuuuullllllllllddddddddddd}
\addpath{(1,3,3)rrrrrrrrrr}
\addpath{(1,6,1)rrrrrrrrrr}
\addpath{(2,0,3)uuuuuuuuuuu}
\addpath{(10,0,3)uuuuuuuuuuu}
\addpath{(7,0,1)uuuuuuuuuuu}
\\
\end{MAT}\ .
\end{equation*}

Let us restrict ourselves to those transformations (i)--(iii) from Definition \ref{def} with $\tilde D$ that preserve the canonical substrips and $\tilde D_{1k}$, and prove that they induce the chessboard matrix problem on the submatrix\\[-7mm]
\[
\begin{matrix}
\\
E=
\end{matrix}
\begin{MAT}(e){rcccc}
\alignbottom
&&\scriptstyle  1& \scriptstyle 6 &\\
\scriptstyle  1 &0&\lefteqn{\diagdown}0_p&0&F_1\\
\scriptstyle  6 &0&0&\lefteqn{\diagdown}0_q&F_2\\
&*&G_1&G_2&*
\addpath{(1,0,3)rrrruuullllddd}
\addpath{(1,2,0)rrrr}
\addpath{(1,1,3)rrrr}
\addpath{(2,0,3)uuu}
\addpath{(3,0,0)uuu}
\addpath{(4,0,3)uuu}
\\
\end{MAT}
\]
in which the blocks $0_p$ and $0_q$ are scored.

(i) We can make with $E$ transformations (i) from Definition \ref{def}  using the following sequence of transformations in $\tilde D$:
\begin{itemize}
  \item[--]
First, make any elementary row transformation in $F_1$. Since the block $\tilde D_{1k}$ is scored, we must make the inverse column transformation in vertical substrip 1 of $\tilde D_{1k}$. This spoils the subblock $I_p$ in position (2,1) of $\tilde D_{1k}$, we restore it by the initial row transformation in horizontal substrip 2. The inverse column transformation spoils $I_p$ in position (3,2), we restore it by the initial row transformation in horizontal substrip 3, and so on. Thus, preserving the submatrix $J_5(0_p)$ in $\tilde D_{1k}$, we must make any elementary transformation of rows in horizontal substrips 1, 2, 3, 4, 5 simultaneously (in particular, of rows of $F_1$) and then the inverse transformation of columns in vertical substrips 1, 2, 3, 4, 5 (in particular, of columns of $G_1$), and so the subblock $0_p$ in $E$ is scored.

  \item[--]
Analogously, we can make any elementary row transformation in horizontal substrips 6, 7, 8 and then the inverse column transformation in vertical substrips 6, 7, 8, and so the subblock $0_q$ in $E$ is scored.
\end{itemize}

(ii) For each $p\times q$ matrix $S$, we can replace $G_2$ by $G_2+G_1S$ as follows. Add vertical substrips 3, 4, and 5 of $\tilde D_{1k}$, multiplied on the right by $S$, to vertical substrips 6, 7, and 8, respectively:
\begin{equation*}\label{fit}
\begin{MAT}(@){rrcccccccccc}
&& & &&&&\lefteqn{\mspace{-27mu}\xrightarrow{\ \cir (S\oplus S\oplus S)\ }}& &&&
  \\
&& &&&\lefteqn{\mspace{-9mu}\overbrace{\qquad\qquad}} &&&\lefteqn{\mspace{-9mu}\overbrace{\qquad\qquad}}  &&&
   \\
&&& \scriptstyle  1& \scriptstyle 2& \scriptstyle 3& \scriptstyle 4   & \scriptstyle 5& \scriptstyle 6 & \scriptstyle 7 & \scriptstyle 8&
    \\
&\scriptstyle  1&0 & 0_p&&&   && & && F_1
     \\
&\scriptstyle  2&0 & I_p&0_p&&&& & && 0
    \\
&\scriptstyle  3&0 & &I_p&0_p&&& & && 0
    \\
\righteqn&\scriptstyle  4&0 & &&I_p&0_p&& & && 0
    \\
 &\scriptstyle  5&0 & &&&I_p&0_p& & && 0
    \\
{\begin{picture}(9,6)
   \put(-55,7){$\scriptstyle -(S\oplus S\oplus S)\cir$}
   \put(-5,0){\vector(0,1){20}}
\end{picture}}
&\scriptstyle  6&0 & &&&&& 0_q&&& F_2
    \\
\righteqn&\scriptstyle  7&0 & &&&&& I_q&0_q&&  0
    \\
&\scriptstyle  8&0 & &&&&& &I_q&0_q& 0\\
 &&&&&&&&&&&
     \\
&&\ \ *\ \  &\; 0\;&\; 0\; &\; 0\; &\; 0\; &G_1& \;0\; &\;0\;&G_2&\ \  *\ \
   \\ &&&&&&&&&&&
\addpath{(2,0,3)rrrrrrrrrruuuuuuuuuuullllllllllddddddddddd}
\addpath{(2,3,3)rrrrrrrrrr}
\addpath{(2,6,2)rrrrrrrrrr}
\addpath{(2,9,0)rrrrrrrrrr}
\addpath{(3,0,3)uuuuuuuuuuu}
\addpath{(5,0,0)uuuuuuuuuuu}
\addpath{(11,0,3)uuuuuuuuuuu}
\addpath{(8,0,2)uuuuuuuuuuu}
\\
\end{MAT}
\end{equation*}
This transformation replaces $G_2$ by $G_2+G_1S$ but also replaces the zero subblocks (4,6) and (5,7) of $\tilde D_{1k}$ by $S$. The inverse row transformation restores the zero subblocks (4,6) and (5,7) but spoils zero subblocks of
horizontal substrip 3 to the right of $\tilde D_{1k}$. We restore them by adding linear combinations of columns of $I_p$. Therefore, $E$ can be reduced by transformations (ii) from Definition \ref{def}.

(iii) For each $q\times p$ matrix $S$, we can replace $F_2$ by $F_2+SF_1$ as follows. Add horizontal substrips 1, 2, and 3, multiplied on the left by $S$, to horizontal substrips 6, 7, and 8, respectively:
\begin{equation*}\label{fie}
\begin{MAT}(@){rrcccccccccc}
& && &&&\lefteqn{\mspace{-33mu}\xleftarrow{\ \cir (-S\oplus S\oplus S)\ }}& &&&&
  \\
&& &\lefteqn{\mspace{-9mu}\overbrace{\qquad\qquad}} &&&&&\lefteqn{\mspace{-9mu}\overbrace{\qquad\qquad}}  &&&
   \\
&&& \scriptstyle  1& \scriptstyle 2& \scriptstyle 3& \scriptstyle 4   & \scriptstyle 5& \scriptstyle 6 & \scriptstyle 7 & \scriptstyle 8&
  \\
&\scriptstyle  1&0 & 0_p&&&   && & && F_1\\
\righteqn&\scriptstyle 2&0 & I_p&0_p&&&& & && 0\\
&\scriptstyle 3&0 & &I_p&0_p&&& & && 0\\
&\scriptstyle 4&0 & &&I_p&0_p&& & && 0\\
{\begin{picture}(9,6)
   \put(-50,7){$\scriptstyle (S\oplus S\oplus S)\cir$}
   \put(-5,38){\vector(0,-1){55}}
\end{picture}}
 &\scriptstyle 5&0 & &&&I_p&0_p& & && 0\\
&\scriptstyle 6&0 & &&&&& 0_q&&& F_2
\\  \righteqn&\scriptstyle  7&0 & &&&&& I_q&0_q&&  0\\
&\scriptstyle 8&0 & &&&&& &I_q&0_q& 0\\
&&&&&&&&&&&\\
&&\ \ *\ \  &\; 0\;&\; 0\; &\; 0\; &\; 0\; &G_1& \;0\; &\;0\;&G_2&\ \  *\ \ \\ &&&&&&&&&&&
\addpath{(2,0,3)rrrrrrrrrruuuuuuuuuuullllllllllddddddddddd}
\addpath{(2,3,3)rrrrrrrrrr}
\addpath{(2,6,2)rrrrrrrrrr}
\addpath{(2,8,0)rrrrrrrrrr}
\addpath{(3,0,3)uuuuuuuuuuu}
\addpath{(6,0,0)uuuuuuuuuuu}
\addpath{(11,0,3)uuuuuuuuuuu}
\addpath{(8,0,2)uuuuuuuuuuu}
\\
\end{MAT}
\end{equation*}
This transformation replaces $F_2$ by $F_2+SF_1$  but also replaces the zero  subblocks (7,1) and (8,2) in $\tilde D_{1k}$ by $S$. The inverse column transformation restores subblocks (7,1) and (8,2) but spoils
zero subblocks in
vertical substrip 3 below $\tilde D_{1k}$. We restore them by adding linear combinations of rows of $I_p$. Therefore, $E$ can be reduced by transformations (iii) from Definition \ref{def}.

\section{Proof of Theorem \ref{theor}}\label{sub3}

Let  ${\cal A}$ and ${\cal B}$ be mutually annihilating operators on a vector space $V$ over a field $\mathbb F$. Let $A$ and
$B$ be their matrices in some basis of
$V$. Changing the basis, we can reduce $(A,B)$ by {similarity transformations} \eqref{fek1}. By Lemma \ref{lem9}(a), $(A,B)$  is similar to \eqref{teu}, in which every summand $(\Phi_i,0_{n_i})$ is of cycle type: it is given by the ordinary loop $\circlearrowleft$ associated with $\Phi_i$.
The direct sum \eqref{teu} is uniquely determined by $(A,B)$, up to permutation of summands and replacement of $(A',B')$ by a similar pair.

Hence, it suffices to prove Theorem \ref{theor} for pairs $({\cal A},{\cal B})$ in which ${\cal A}$ is nilpotent.
Then $A$ is nilpotent too.

\begin{lemma}\label{lej}
Let $(A,B)$ be a pair of mutually annihilating matrices in which $A$ is nilpotent.
Then the algorithm from Sections \ref{sub1} and \ref{sub2} reduces $(A,B)$ by similarity transformations to a pair $(A_0,B_0)$ of matrices that can be conformally partitioned into blocks such that
each horizontal or vertical strip contains at most one nonzero block and this block is nonsingular.
\end{lemma}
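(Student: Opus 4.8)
The plan is to stack the two reductions already established. First I would invoke Lemma \ref{lem9n}: since $A$ is nilpotent, $(A,B)$ is similar to a pair $(J^{\text{\it +}},C)$ with $J^{\text{\it +}}$ of the form \eqref{kids} and $C$ of the form \eqref{fry}--\eqref{uyd}, and the similarity transformations that preserve $J^{\text{\it +}}$ induce on the submatrix $D$ of \eqref{jyt6} exactly the chessboard matrix problem of Definition \ref{def} (transformations (i)--(iii)), with scored blocks $D_{11},\dots,D_{tt}$; moreover all blocks of $C$ outside $D$ stay zero throughout these transformations. Next I would run the algorithm of Lemma \ref{lemu} on $D$ (with the diagonal blocks scored). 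Since that algorithm uses only transformations (i)--(iii) together with refinements of the partition into substrips, each of its matrix steps lifts, via Lemma \ref{lem9n}, to a similarity transformation of $(J^{\text{\it +}},C)$ preserving $J^{\text{\it +}}$, while the refinements are mere relabelings. Hence $(A,B)$ is carried by similarity transformations to $(A_0,B_0):=(J^{\text{\it +}},C_0)$, where the $D$-part of $C_0$ is the matrix $D_0$ produced by Lemma \ref{lemu}: every horizontal or vertical substrip of $D_0$ contains at most one nonzero subblock, that subblock is nonsingular, and the horizontal and vertical partitions of each scored diagonal block coincide.

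It then remains to choose one conformal partition of $(A_0,B_0)$ and verify the two properties. Let $r_i=r_{i,1}+\dots+r_{i,s_i}$ be the partition of the $i$-th strip of $D_0$; by Lemma \ref{lemu}(b) this is the same whether the $i$-th strip is read as a row strip or a column strip, because the diagonal subblocks of the scored block $D_{ii}$ are square. Refine the block structure of $J^{\text{\it +}}$ and of $C_0$ by subdividing, inside each summand $J_{m_i}(0_{r_i})$, every block row and block column of size $r_i$ according to $r_i=r_{i,1}+\dots+r_{i,s_i}$. This is one common partition of $A_0$ and $B_0$, and it is conformal since rows and columns are subdivided identically. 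Under this refinement each off-diagonal identity block $I_{r_i}$ of \eqref{gte} becomes the block-diagonal matrix with diagonal blocks $I_{r_{i,1}},\dots,I_{r_{i,s_i}}$; hence every horizontal or vertical substrip of $A_0=J^{\text{\it +}}$ meets exactly one nonzero block, an identity matrix, which is nonsingular. For $B_0=C_0$, all nonzero blocks lie inside $D_0$ (placed, inside each $C_{ij}$, at the bottom-left corner as in \eqref{uyd}), so a refined substrip of $C_0$ either meets only zero blocks or coincides with a substrip of $D_0$ and therefore, by \eqref{lye}, meets at most one nonzero block, which is nonsingular. This yields the desired $(A_0,B_0)$.

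The step I expect to be the main obstacle is the bookkeeping in the last paragraph: checking that the substrip partitions furnished by Lemma \ref{lemu} lift to a \emph{globally consistent} conformal partition of the full matrices $J^{\text{\it +}}$ and $C_0$, so that \emph{both} $A_0$ and $B_0$ acquire the single-nonzero-block-per-strip property with respect to the \emph{same} partition. The two facts that make this go through are that the horizontal and vertical partitions of each scored diagonal block of $D_0$ agree (keeping the refined partition conformal) and that every nonzero block of $C$ lies in $D$ and remains there (confining all the work to $D_0$); given these, the verification is routine.
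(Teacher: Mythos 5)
Your proposal is correct and follows essentially the same route as the paper: combine Lemmas \ref{lem9n} and \ref{lemu}, use Lemma \ref{lemu}(b) to see that the substrip partition of each scored $D_{ii}$ is the same horizontally and vertically, extend it conformally to all of $C$ and $J^{\text{\it +}}$, and observe that each $I_{r_i}$ splits into identity diagonal subblocks while all nonzero blocks of $C$ stay inside $D$. (Only a cosmetic slip: a substrip of $J^{\text{\it +}}$ meets \emph{at most} one nonzero block, not exactly one, since the first block row and last block column of each $J_{m_i}(0_{r_i})$ are zero; this is all the lemma requires.)
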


\begin{proof}  By Lemmas \ref{lem9n}  and \ref{lemu}, $(A,B)$ is similar to some pair $(A_0,B_0):=(J^{\text{\it +}},C)$  of block matrices in which $J^{\text{\it +}}$ is of the form \eqref{kids} and the submatrix $D$ of $C$ defined in \eqref{uyd} and \eqref{jyt6} is additionally partitioned into subblocks such that each horizontal or vertical substrip  contains at most one nonzero subblock and this subblock is nonsingular.

Since all diagonal blocks $D_{11},D_{22},\dots,D_{tt}$ of $D$ are scored, by Lemma \ref{lemu}(b) the diagonal subblocks of each $D_{ii}$ (with respect to the new partition) are square; that is, the partition of $D_{ii}$ into horizontal substrips coincides with its partition into vertical substrips. Each $C_{ii}$ in \eqref{uyd} consists of $m_i^2$ square blocks of the same size and one of them is $D_{ii}$; we partition each block of $C_{ii}$ into subblocks conformally to the partition of $D_{ii}$ and extend this partition to the whole $C$. Since all subblocks of $C$ outside of $D$ are zero, each horizontal or vertical substrip of $C$  contains at most one nonzero subblock and this subblock is nonsingular.

Partition $J^{\text{\it +}}$ into subblocks conformally to the partition of $C$ into subblocks.
The partition \eqref{gte} of each $J_{m_i}(0_{r_i})$ into blocks is conformal to the partition \eqref{uyd} of $C_{ii}$ into blocks; moreover, the partition of each $I_{r_i}$ in $J_{m_i}(0_{r_i})$ is conformal to the partition of $D_{ii}$ into subblocks. Thus, all diagonal subblocks of $I_{r_i}$ are square; i.e., they are the identity matrices.
\end{proof}

Let $({\cal A},{\cal B})$ be given by a pair $(A_0,B_0)$ of block matrices  described in Lemma \ref{lej}.
Decompose the vector space  $V$ into the direct sum
\begin{equation}\label{kud}
V=V_1\oplus\dots\oplus V_t
\end{equation}
conformally to the partition
of $A_0$ and $B_0$ into blocks.

Let us construct a graph $\Gamma$  with vertices
$1,2,\dots,t$, ordinary arrows
$\longrightarrow$, and double arrows $\Longrightarrow$, as follows.
If block $(i,j)$ of $A_0$ is nonzero,
then ${\cal A}V_j\subset V_i$, we draw
$j\longrightarrow i$. If block $(i,j)$ of $B_0$ is nonzero, then ${\cal
B}V_j\subset V_i$, we draw
$j\Longrightarrow i$.

Thus, the number of
arrows is equal to the number of
nonzero blocks in $A_0$ and $B_0$.
The number of arrows in each
vertex $j$ is at most $2$. If it is
$2$ then there are only 3 possibilities for the behaviour of arrows in $j$:
\begin{equation*}\label{y4j}
i\longrightarrow j\longrightarrow k,\qquad
i\longrightarrow j\Longleftarrow k,\qquad i\Longleftarrow
j\Longleftarrow k,
\end{equation*}
because
\begin{itemize}
  \item
the cases $i\longrightarrow j\longleftarrow k$ and
$i\Longrightarrow  j\Longleftarrow k$ are
impossible since each horizontal strip
contains at most one nonzero block,
  \item
the cases $i\longleftarrow j\longrightarrow k$ and $
i\Longleftarrow j\Longrightarrow k$ are impossible since each vertical strip
contains at most one nonzero block,

  \item
the cases $i\longrightarrow j\Longrightarrow k$ and
$i\Longrightarrow  j\longrightarrow k$ are impossible
by  ${\cal A}{\cal B}={\cal B}{\cal A}=0$.

  \end{itemize}
Therefore, each connected component of the graph $\Gamma$  is either a path graph \eqref{jut} or a cycle graph \eqref{jut1} (up to renumeration of vertices).

Let $\Gamma_1,\dots,\Gamma_r$ be all connected components of $\Gamma$.
For each $\Gamma_l$, denote by $W_l$ the direct sum of all spaces $V_i$ from \eqref{kud} that correspond to the vertices of $\Gamma_l$. Clearly, $W_l$ is invariant under the operators $\cal A$ and $\cal B$. Denote by  ${\cal A}_l$ and ${\cal B}_l$ their restrictions on $W_l$. Then
\begin{align}\label{tew}
V=W_1\oplus\dots\oplus W_r,\qquad
({\cal A},{\cal B})= ({\cal A}_1,{\cal B}_1)\oplus\dots\oplus ({\cal A}_r,{\cal B}_r).
\end{align}

\emph{Case 1: $r=1$}. Then $\Gamma$ is of the form \eqref{jut} or \eqref{jut1}, each vertex $i$ is assigned by the vector space $V_i$ and each arrow $i\text{ --- } [i]$ with  \[
[1]:=2,\ \dots,\ [t-1]:=t,\quad [t]:=1
\]
is associated with the linear bijection ${\cal F}_i$ between the corresponding vector spaces, which is induced by $\cal A$ or $\cal B$. Starting from a basis in $V_1$ and taking the images or preimages with respect to ${\cal F}_1,\dots,{\cal F}_{t-1}$, we sequentially construct the bases in $V_2,\dots,V_t$. The linear bijections ${\cal F}_1,\dots,{\cal F}_{t-1}$ are given in these bases by the identity matrices
\begin{equation}\label{yre}
F_1=\dots=F_{t-1}=I_d,\qquad d:=\dim V_1.
\end{equation}

If $\Gamma$ is a path graph, then the pair $({\cal A},{\cal B})$ is the direct sum of $d$ pairs of path type (see Definition \ref{kie}).

Let $\Gamma$ be a cycle graph of the form \eqref{jut1}.

If \eqref{jut1} is periodic (see Definition
\ref{rsi}(ii)), then we make it aperiodic as follows. The sequence $(c_1,\dots,c_t)$ defined by \eqref{ytj} is periodic; i.e.,
 \[
 (c_1,\dots,c_t)= (c_1,\dots,c_{\tau}; c_{\tau+1},\dots,c_{2\tau};\dots ;c_{(q-1)\tau+1}, \dots,c_{q\tau}).
 \]
for some $\tau<t$ that divides $t$.
Let $\tau $ be the minimal number with this property. Replace $\Gamma$ by the graph that is defined by the sequence
 $(c_1,\dots,c_{\tau})$ and replace each $V_i$ ($i=1,\dots,\tau$) by $V_i\oplus V_{i+\tau}\oplus V_{i+2\tau}\oplus\dots$.
 The obtained graph is aperiodic and gives the same pair of mutually annihilating operators.

Thus, $\Gamma$ is aperiodic.
Choose other bases in $V_1,\dots,V_t$ using transition matrices $S_1,\dots,S_t$. Then $F_i$ changes by the rule
\begin{equation*}
F'_i=
  \begin{cases}
S^{-1}_{[i]}F_iS_i & \text{if $i \longrightarrow [i]$}\,, \\
S^{-1}_{i}F_iS_{[i]} & \text{if $i \Longleftarrow [i]$}\,,
  \end{cases}\qquad i=1,\dots,t,
\end{equation*}
and so the matrix
\begin{equation*}
G_i:=
  \begin{cases}
F_i^{-1} & \text{if $i \longrightarrow [i]$} \\
F_i & \text{if $i \Longleftarrow [i]$}
  \end{cases}
\end{equation*}
changes by the rule
\begin{equation*}\label{dgte}
G'_i=
S^{-1}_{i}G_iS_{[i]},\qquad i=1,\dots,t.
\end{equation*}

If $S_1=\dots=S_t$, then the matrices \eqref{yre} do not change and $G_t$ is reduced by similarity transformations. Convert it to the Frobenius canonical matrix
\begin{equation}\label{loy}
\Phi=\Phi_1\oplus\dots\oplus \Phi_p
\end{equation}
in which every $\Phi_i$ is an $n_i\times n_i$ Frobenius block of the form \eqref{3},
and obtain
\begin{equation}\label{gree}
(G'_1,\dots,G'_t)=(I,\dots,I,\Phi).
\end{equation}
Then taking
\[
(S_1,\dots,S_t)=(I,\dots,I,\Phi,\dots,\Phi)
\]
we may convert \eqref{gree} into
\begin{equation}\label{greed}
(G''_1,\dots,G''_t)=(I,\dots,I,\Phi, I,\dots,I)
 \end{equation}
with $\Phi$ at any position.

Since $\Gamma $ is aperiodic, it contains at least one double arrow; otherwise it is the ordinary loop $\circlearrowleft$ associated with a nonsingular matrix, but this is impossible since $\cal A$ is nilpotent.
Let $i \Longleftarrow [i]$ be any double arrow. By \eqref{greed} with $\Phi$ at the position $i$, we can associate $G''_i=F''_i=\Phi$ with this arrow. By \eqref{loy}, $({\cal A},{\cal B})$ is the direct sum of $p$ pairs of cycle type (see Definition \ref{rsi}).
\medskip

\emph{Case 2: $r>1$}.
Each pair
$({\cal A}_l,{\cal B}_l)$ in the decomposition \eqref{tew} corresponds to the connected graph $\Gamma _l$. Reasoning as in Case 1, we decompose
$({\cal A}_l,{\cal B}_l)$ into a direct sum of pairs of path or cyclic type.

Thus, we have decomposed $({\cal A},{\cal B})$ into a direct sum of
pairs of path and cycle types, which proves the existence of the decomposition from Theorem \ref{theor}.
By \eqref{greed}, for each cyclic graph that corresponds to a summand of cyclic type, we can transfer the Frobenius block associated with a double arrow to any other double arrow.

This decomposition is uniquely determined by $({\cal A},{\cal B})$ up to transformations  (i) and (ii) from Theorem \ref{theor}(a) since
each direct summand is indecomposable and distinct summands are isomorphic if and only if they are of cyclic type and the corresponding cyclic graphs coincide up to transformations (ii). Hence, we can use the Krull--Schmidt theorem \cite[Chapter 1, Theorem 3.6]{bas}, which ensures that each quiver representation is isomorphic to a direct sum of indecomposable representations determined uniquely up to isomorphism of summands. Hence, each system of linear mappings uniquely decomposes into a direct sum of indecomposable systems, up to isomorphism of summands (moreover, by \cite[Theorem 2]{ser_izv} each system of bilinear forms and linear mappings over $\mathbb C$ and $\mathbb R$ uniquely decomposes into a direct sum of indecomposable systems, up to isomorphism of summands).

This proves the statement (a) of Theorem \ref{theor}.
The statement (b) follows from (a) since two pairs of linear operators are isomorphic if and only if their matrix pairs are similar.

\section*{Acknowledgment}

The authors would like to thank Alexander E. Guterman for a very careful reading of the manuscript and many helpful suggestions.

\end{document}